\newtheorem{thm}{Theorem}[section]
\newtheorem{prop}[thm]{Proposition}
\newtheorem{lem}[thm]{Lemma}
\theoremstyle{definition}
\newtheorem{rem}[thm]{Remark}
\newcommand{\ep}{\epsilon}
\newcommand{\mbb}{\mathbb}
\newcommand{\pa}{\partial}
\newcommand{\mf}{\mathbf}
\newcommand{\Om}{\Omega}
\newcommand{\z}{\zeta}
\newcommand{\la}{\lambda}
\newcommand{\Ga}{\Gamma}
\newcommand{\ra}{\rightarrow}
\renewcommand{\Re}{\operatorname{Re}}
\begin{document}
\title{Remarks on the higher dimensional Suita conjecture}

\thanks{The second named author was partially supported by the DST-INSPIRE grant IFA-13 MA-21.}

\author{G.P. Balakumar, Diganta Borah, Prachi Mahajan and Kaushal Verma}

\address{G. P. Balakumar: Department of Mathematics, Indian Institute of Technology Palakkad, 678557, India}
\email{gpbalakumar@gmail.com}

\address{Diganta Borah: Indian Institute of Science Education and Research, Pune  411008, India}
\email{dborah@iiserpune.ac.in}

\address{Prachi Mahajan: Department of Mathematics, Indian Institute of Technology Bombay, Powai, Mumbai 400076, India}
\email{prachi@math.iitb.ac.in}

\address{Kaushal Verma: Department of Mathematics, Indian Institute of Science, Bangalore 560 012, India}
\email{kverma@iisc.ac.in}

\keywords{Suita conjecture, Bergman kernel, Kobayashi indicatrix}
\subjclass{32F45, 32A07, 32A25}

\begin{abstract}
To study the analog of Suita's conjecture for domains $D \subset \mbb C^n$, $n \ge 2$, B\l ocki introduced the invariant $F^k_D(z)=K_D(z)\la\big(I^k_D(z)\big)$, where $K_D(z)$ is the Bergman kernel of $D$ along the diagonal and $\la\big(I^k_D(z)\big)$ is the Lebesgue measure of the Kobayashi indicatrix at the point $z$. In this note, we study the behaviour of $F^k_D(z)$ (and other similar invariants using different metrics) on strongly pseudconvex domains and also compute its limiting behaviour explicitly at certain points of decoupled egg domains in $\mbb C^2$.
\end{abstract}

\maketitle

\section{Introduction}

\noindent
An essential step in B\l ocki's second proof of Suita's conjecture was to show that the Bergman kernel of a pseudoconvex domain in $\mathbb C^n$ admits a lower bound in terms of the volume of the sub-level sets of the pluricomplex Green's function. Let us recall some results from \cite{B1}. Write $\lambda$ for standard Lebesgue measure. For a domain $D \subset \mathbb C^n$, 
\[
K_D(z) = \sup \left\{ \vert f(z) \vert^2 : f \in \mathcal O(D), \int_D \vert f \vert^2 \; d \lambda \le 1 \right\}
\]
is the associated Bergman kernel (on the diagonal) and for $z \in D$,
\[
G_{D, z}(\zeta) = \sup \big\{ u (\zeta) : u \in PSH(D), u \le 0 \; \text{and} \;\limsup_{t \rightarrow z} \left( u(t) - \log \vert t - z \vert \right) < \infty \big\}
\]
is the pluricomplex Green's function for $D$ with pole at $z$. Theorem 1 of \cite{B1} shows that if $D$ is pseudoconvex, then for $a \ge 0$,
\[
K_D(z) \ge e^{-2na} \left( \lambda \big\{ \zeta : G_{D, z}(\zeta) < -a \big\}  \right)^{-1}
\]
for $z \in D$. When $D \subset \mathbb C$, the expression on the right approaches $ c^2_D(z)/ \pi$ as $ a \rightarrow \infty $, where 
$c_D(z)$ is the logarithmic capacity of the complement of $D$ with respect to the base point $z$. This shows that $c^2_D \le \pi K_D$ and 
this was precisely Suita's conjecture. Asking for the existence of this limit for a given $D \subset \mathbb C^n$ can then be regarded 
as an analog of Suita's conjecture in higher dimensions. To this end, note that if $D$ is convex, Lempert's theory shows that there 
is a diffeomorphism between $I^k_D(z)$, the Kobayashi indicatrix at $z$, and $D$. This observation is the basis of Theorem 2 of \cite{B1} which shows that
\[
\lim_{a \rightarrow \infty} e^{2na} \lambda \big \{ z : G_{D, z}(\zeta) < -a \big\}=  \la\big(I^k_D(z)\big)
\]
and hence 
\[
K_D(z) \la\big(I^k_D(z)\big) \ge 1
\]
everywhere on a convex domain $D$. This theme was pursued further in \cite{BZ}, \cite{B2} which shows that the limit exists if $D$ is pseudoconvex, the limiting value being $\la\big(I^a_D(z)\big)$, where 
\[
I^a_D(z) = \big\{ v \in \mbb C^n: \limsup_{\zeta \rightarrow 0} \big( G_{D, z}(z + \zeta v) - \log \vert \zeta \vert\big) < 0 \big\}
\]
is the indicatrix of the Azukawa metric
\[
a_D(z, v) = \limsup_{\zeta \ra 0} \vert \zeta \vert^{-1} \exp\big(G_{D, z}(z + \zeta v) \big).
\]

\medskip

For an invariant pseudo-metric $\tau$ on $D$, let $I^{\tau}_D(z)$ be the associated indicatrix at $z \in D$. The purpose of this note is to study the biholomorphic invariants 
\[
F^{\tau}_D(z) = K_D(z) \la\big(I^{\tau}_D(z)\big)
\]
when $\tau= c, a, k$ which are the Carath\'{e}odory, Azukawa and Kobayashi metrics respectively on a strongly pseudoconvex $D \subset \mbb C^n$. The prime motivation for doing so is the result presented in \cite{BZ} namely, 
\[
1 \le F^k_D(z) \le C^n
\]
where $C = 16$ or $4$ accordingly as $D$ is $\mathbb C$-convex or convex respectively. 

\begin{thm}\label{spscvx}
Let $D \subset \mbb C^n$ be a bounded $C^2$-smooth strongly pseudoconvex domain. Then for $\tau = c, a$ and $k$, 
\[
\lim_{z \to p_0} F^{\tau}_D(z)=1
\]
for all $p_0 \in \pa D$.
\end{thm}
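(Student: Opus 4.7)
\medskip

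\noindent\textbf{Proof plan.} The plan is to use Pinchuk-type scaling at the strongly pseudoconvex boundary point $p_0$ to reduce, via biholomorphic invariance, to evaluating $F^\tau$ at the origin of the unit ball $\mbb B_n$, where a direct calculation yields $1$. Fix a sequence $z_j \to p_0$ in $D$ and choose affine coordinates centered at $p_0$ in which $\pa D$ reads $\Re z_1 + H(z', \ov{z'}) + o(|z|^2) < 0$, with $H$ the Levi form of $\pa D$ at $p_0$. Standard anisotropic stretching then produces polynomial automorphisms $\Phi_j$ of $\mbb C^n$ with $\Phi_j(z_j) = q_0 := (-1, 0, \ldots, 0)$ and $D_j := \Phi_j(D)$ converging in the Carath\'eodory--Hausdorff sense to the Siegel model $\Sigma = \{z \in \mbb C^n : \Re z_1 + H(z', \ov{z'}) < 0\}$, which is biholomorphic to $\mbb B_n$. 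By invariance, $F^\tau_D(z_j) = F^\tau_{D_j}(q_0)$, so it suffices to pass to the limit on the right-hand side.

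\medskip

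The core of the argument is the stability of the two factors in $F^\tau_{D_j}(q_0) = K_{D_j}(q_0)\,\la\big(I^\tau_{D_j}(q_0)\big)$ under this scaling. The Bergman kernel factor passes to the limit by a Ramadanov-type argument based on H\"ormander's $L^2$-estimates on the uniformly strongly pseudoconvex domains $D_j$, giving $K_{D_j}(q_0) \to K_\Sigma(q_0)$. For each $\tau \in \{c, a, k\}$, the pseudo-metric $\tau_{D_j}(q_0, \cdot)$ converges locally uniformly on $\mbb C^n$ to $\tau_\Sigma(q_0, \cdot)$: for $k$ this is classical (Pinchuk, Gaussier); for $c$ it is a normal-family argument applied to the defining holomorphic disks into $D_j$; and for $a$ it follows from the stability of the pluricomplex Green's function $G_{D_j, q_0}$ on strongly pseudoconvex targets. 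Together with uniform inclusions $B(0, r) \subset I^\tau_{D_j}(q_0) \subset B(0, R)$ valid for large $j$, these metric convergences yield $\la\big(I^\tau_{D_j}(q_0)\big) \to \la\big(I^\tau_\Sigma(q_0)\big)$.

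\medskip

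Finally, invariance under $\Sigma \cong \mbb B_n$ and the transitivity of the ball's automorphism group let one evaluate at the origin of $\mbb B_n$, where all three pseudo-metrics coincide with the Euclidean norm: each indicatrix is $\mbb B_n$ itself, of Lebesgue volume $\pi^n/n!$, and $K_{\mbb B_n}(0) = n!/\pi^n$, so $F^\tau_\Sigma(q_0) = F^\tau_{\mbb B_n}(0) = 1$. The anticipated main obstacle is the Azukawa case: unlike Kobayashi, where holomorphic disks give robust upper bounds, or Carath\'eodory, where bounded holomorphic functions give robust lower bounds via normal families, the Azukawa metric is controlled by $G_{D_j, q_0}$, and extracting locally uniform convergence requires continuity of the Green's function on compacta together with two-sided comparison estimates for the scaled strongly pseudoconvex domains.
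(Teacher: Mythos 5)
Your proposal follows essentially the same route as the paper: Pinchuk scaling at $p_0$, biholomorphic invariance $F^{\tau}_D(z_j)=F^{\tau}_{D_j}(q_0)$, a Ramadanov-type theorem for the scaled Bergman kernels, stability of the metrics and hence of the indicatrix volumes, and evaluation on the ball where $F^{\tau}\equiv 1$. The differences are in two sub-steps. First, the obstacle you single out as the main one --- direct stability of the Azukawa metric via the pluricomplex Green's functions $G_{D_j,q_0}$ --- is sidestepped entirely in the paper: since $c\le a\le k$ on every domain and the limit domain (the Siegel model) is biholomorphic to $\mbb B^n$, where $c=a=k$, convergence of $c_{D_j}$ and $k_{D_j}$ squeezes out convergence of $a_{D_j}$, and likewise $I^k\subseteq I^a\subseteq I^c$ reduces the indicatrix stability to the extreme cases $\tau=c,k$; no Green's-function analysis is needed. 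Second, for the Carath\'eodory metric note that a bare normal-family argument (and ``holomorphic disks into $D_j$'' is a slip --- those define Kobayashi; Carath\'eodory uses maps into the disc) only yields the upper semicontinuity $\limsup_j c_{D_j}(q_0,v)\le c_{\Sigma}(q_0,v)$; the lower bound requires localization of $c$ near strongly pseudoconvex points (Graham) together with Lempert's equality $c=k$ on a small convex piece, which is how the paper (following Seshadri--Verma) handles it. Finally, the paper's Ramadanov lemma is proved not via H\"ormander $L^2$-estimates but through the extremal (minimizing) characterization of the Bergman kernel, using that the scaled domains are eventually contained in small homothetic dilates $(1+\ep)(\Omega-q)$ of the star-convex limit domain; your $L^2$-estimate route would also work but needs uniform pseudoconvexity hypotheses that the homothety trick avoids.
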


The scaling method is useful here in that it allows us to reduce the problem to the case of the 
unit ball $\mbb B^n \subset \mbb C^n$ for which this invariant is equal to $1$ everywhere. Two ingredients 
are needed to do this. One, we prove a Ramadanov-type convergence theorem to control the sequence of scaled 
Bergman kernels, and this may perhaps be useful in other applications. Second, since $c \le a \le k$ on all domains, it follows that 
\[
I^k_D(z) \subseteq I^a_D(z) \subseteq I^c_D(z)
\]
and hence it suffices to prove a stability statement for the indicatrices of only the Carath\'{e}odory 
and Kobayashi metrics on the scaled domains. Since these metrics have the same boundary asymptotics
on strongly pseudoconvex domains, the volumes of their indicatrices have the same limiting values. In fact, we remark that the Theorem 1.1 holds for any invariant pseudometric $\tau$ satisfying $c \leq \tau \leq k$ by the same reasoning.
\medskip

\noindent We conclude the article by supplementing the observations in \cite{BZ}, \cite{BZ1} by analyzing  the boundary 
behaviour of $F^k_D$ for some decoupled egg domains which may possibly be non-convex. The main 
ingredients are the Wu metric and associated geometric analysis from \cite{CK2}, in estimating the measure of the indicatrix $I^k_D(z)$.

\section{Proof of Theorem \ref{spscvx}}
\noindent Let $ p^j $ be a sequence of points in $ D $ converging to $ p^0 \in \partial D $. Denote by $ \zeta^j \in \partial D $ the point closest to $ p^j $. Note that
$ \zeta^j \rightarrow p^0 $. Here and in the sequel, $ z \in \mathbb{C}^n $ is written as $ z = ('z, z_n) \in \mathbb{C}^{n-1} \times \mathbb{C} $. By 
\cite{Pinchuk-1980}, there exists a sequence $ \{ \phi^j \} $ of automorphisms of $ \mathbb{C}^n $ such that $ \phi^j ( \zeta^j) = ('0,0) $ for each $ j$ and 
the domains $ \phi^j(D) $ near the origin are defined by
\[
\{ z =('z, z_n) \in \mathbb{C}^n: 2 \Re \left(z_n + Q^j(z) \right) + H^j(z) + o ( |z|^2 ) < 0 \},
\]
where $ Q^j(z) = \sum_{\mu, \nu=1}^n q_{\mu \nu} (\zeta^j) z^{\mu} z^{\nu} $, $ H^j(z) = \sum_{\mu, \nu=1}^n h_{\mu \nu} (\zeta^j) z^{\mu} \overline{z}^{\nu} $ with
$ Q^j('z, 0) \equiv 0 $ and $ H^j('z, 0) \equiv |'z|^2 $. 

\medskip

\noindent Set $ \delta_j = \mbox{dist} \left( \phi^j(p^j), \partial \left(\phi^j(D) \right) \right) $, the Euclidean distance of 
$ \phi^j(p^j) $ to $ \partial \left(\phi^j(D)\right) $. Then $ \phi^j(p^j) = ('0, -\delta_j) $. Consider the dilations $ T^j : \mathbb{C}^n \rightarrow \mathbb{C}^n $ 
defined by 
\[
 T^j( 'z, z_n) = \left(   \frac{'z}{\sqrt{\delta_j}}, \frac{z_n}{{\delta}_j } \right).
\]
It follows that $ T^j \circ \phi^j (p^j) = ('0, -1) $ and the scaled domains $ D^j= T^j \circ \phi^j(D) $ converge in the local Hausdorff sense to the
unbounded realization of the unit ball, namely to
\[
 D_{\infty}= \{ z \in \mathbb{C}^n : 2 \Re z_n + |'z|^2 < 0 \}.
\]
Write $ ('0,-1) = p^* $ for brevity. Recall that the Cayley transform
\begin{equation} \label{E1}
 \Psi: ('z,z_n) \mapsto \left( \frac{{\sqrt{2}\;'z}}{1-z_n}, \frac{1+ z_n}{1 - z_n}\right)
\end{equation}
yields a biholomorphism from $ D_{\infty} $ onto $ \mathbb{B}^n$ that sends $p^*$ to the origin.

\medskip 

\noindent 
Observe that
\begin{equation}\label{F-scaling}
F^{\tau}_D(p^j)=F^{\tau}_{D^j} \left(p^*\right) = K_{D^j}( p^*) \la\big(I^{\tau}_{D^j} (p^*)\big).
\end{equation}

\noindent At this stage, we need (i) a Ramadanov type theorem for stability of the Bergman kernels, and (ii) stability of the Carath\'{e}odory and Kobayashi indicatrices 
under scaling.

\subsection{Stability of the Bergman kernels}
Recall that the Bergman kernel $K_{\Om}(z,w)$ of a domain $\Om \subset \mf{C}^n$ is the reproducing kernel for
the space $ A^2(\Omega) $ of square-integrable holomorphic functions in $ \Omega $, that is, $ f(z) = \int_{\Omega} f(w) K_{\Omega}(z,w) d \lambda(w) $ 
when $ f \in A^2(\Omega) $. Moreover, if $K_{\Om}(w)=K_{\Om}(w,w) > 0$, then $K_{\Om}(\cdot,w)/K_{\Om}(w)$ 
is the only function in $ A^2(\Omega) $ solving the extremal problem
\[
\text{minimize $\int_{\Om} \vert f \vert^2 \; d \lambda$ \; subject to $f \in A^2(\Om)$ and $f(w)=1$}.
\]

\begin{lem}\label{ramadanov}
Let $ \Omega^j $ be a sequence of domains in $ \mathbb{C}^n $ converging to $ \Omega \subset \mathbb{C}^n $ in the following way:
\begin{enumerate}[(i)] 
\item if $ S $ is a compact subset of $ \Omega $, then $ S \subset \Omega^j $ for all $ j $ large, and
 \item there exists a common interior point $ q $ of $ \Omega $ and $ \Omega ^j $ for all $ j $, such that for every $\ep>0$ there exists $j_{\ep} $  
 satisfying
\begin{equation} \label{E6}
\Omega^{j}-q \subset (1+\ep)(\Omega-q), 
\end{equation}
\end{enumerate}
for all $j \ge j_{\ep}$. Here, $ \Omega - q $ denotes the affine translation of $\Omega$ by $ - q $ and for $r > 0$, $r(\Omega - q)$ is the image of $\Omega - q$ under the homothety $T(v) = r(v - q) + q$ for $v \in \mathbb C^n$. Furthermore, assume that $ \Omega $ is 
star convex with respect to the point $ q $ and $ K_{\Omega} $ is non-vanishing along the diagonal. Then $K_{\Omega^j} \to K_{\Omega}$ uniformly 
on compact subsets of $ \Omega \times \Omega$.
\end{lem}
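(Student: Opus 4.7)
The plan is to first establish pointwise convergence of the diagonal values $K_{\Omega^j}(w) \to K_\Omega(w)$ for $w \in \Omega$, and then upgrade this to locally uniform convergence on $\Omega \times \Omega$ by an identity-principle argument for functions holomorphic in $z$ and antiholomorphic in $w$. The device that ties conditions (i), (ii) and star-convexity together is the contracting affine map $S_\ep : z \mapsto q + (z - q)/(1+\ep)$: by (ii), $S_\ep(\Omega^j) \subset \Omega$ for $j \ge j_\ep$, and by star-convexity at $q$, $S_\ep(w) \in \Omega$ for every $w \in \Omega$.

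For the lower bound on $K_{\Omega^j}(w)$, the transformation rule $K_{S_\ep(\Omega^j)}(S_\ep(z)) = (1+\ep)^{2n} K_{\Omega^j}(z)$ under the affine $S_\ep$, combined with the monotonicity $S_\ep(\Omega^j) \subset \Omega \Rightarrow K_{S_\ep(\Omega^j)} \ge K_\Omega$ on $S_\ep(\Omega^j)$, gives
\[
(1+\ep)^{2n}\, K_{\Omega^j}(w) \ge K_\Omega(S_\ep(w)), \qquad w \in \Omega \cap \Omega^j.
\]
Letting $j \to \infty$ and then $\ep \to 0$, using continuity of $K_\Omega$ on the diagonal, yields $\liminf_j K_{\Omega^j}(w) \ge K_\Omega(w)$. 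For the matching upper bound, let $\phi^j \in A^2(\Omega^j)$ be the extremal function with $\phi^j(w) = 1$ and $\|\phi^j\|^2_{L^2(\Omega^j)} = 1/K_{\Omega^j}(w)$. The non-vanishing hypothesis on $K_\Omega$ together with the lower bound makes $\|\phi^j\|^2_{L^2(\Omega^j)}$ eventually bounded, while the sub-mean value inequality on Euclidean balls that lie in $\Omega^j$ for $j$ large (possible by (i)) shows $\{\phi^j\}$ is locally uniformly bounded on compacta of $\Omega$. A Montel and diagonal extraction yields a subsequence $\phi^{j_k} \to \phi$ locally uniformly on $\Omega$ with $\phi \in \mathcal{O}(\Omega)$ and $\phi(w) = 1$; monotone convergence along a compact exhaustion of $\Omega$ then gives $\int_\Omega |\phi|^2\, d\la \le \liminf_k 1/K_{\Omega^{j_k}}(w)$. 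The extremal property on $\Omega$ forces $\int_\Omega |\phi|^2\, d\la \ge 1/K_\Omega(w)$, so $\limsup_k K_{\Omega^{j_k}}(w) \le K_\Omega(w)$. As this holds along every subsequence, $K_{\Omega^j}(w) \to K_\Omega(w)$.

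To pass to locally uniform convergence on $\Omega \times \Omega$, use the Cauchy-Schwarz bound $|K_{\Omega^j}(z, w)|^2 \le K_{\Omega^j}(z)\, K_{\Omega^j}(w)$ together with the sub-mean value estimate once more to see that $\{K_{\Omega^j}\}$ is locally uniformly bounded on $\Omega \times \Omega$. Each $K_{\Omega^j}(z, w)$ is holomorphic in $z$ and antiholomorphic in $w$, so Montel extracts from any subsequence a further subsequence converging locally uniformly to a function $\Phi$ on $\Omega \times \Omega$ with the same sesqui-analytic structure. The diagonal convergence identifies $\Phi(z, z) = K_\Omega(z, z)$, and comparing coefficients in the local bi-power-series shows that such a $\Phi$ is uniquely determined by its diagonal values. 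Hence $\Phi \equiv K_\Omega$, and since every subsequence has a further subsequence converging to the same limit, the full sequence $K_{\Omega^j}$ converges to $K_\Omega$ uniformly on compact subsets of $\Omega \times \Omega$.

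The principal obstacle is the lower bound: conditions (ii) and star-convexity at $q$ must act in concert so that $S_\ep$ simultaneously contracts each $\Omega^j$ into $\Omega$ and keeps each test point $w \in \Omega$ inside $\Omega$, enabling the elementary monotonicity of the Bergman kernel to produce the needed estimate. Dropping either hypothesis destroys the reduction. The non-vanishing of $K_\Omega$ on the diagonal is what guarantees that the extremal functions $\phi^j$ form a non-degenerate normal family, so that the upper bound argument actually yields a function comparable to the extremal function on $\Omega$.
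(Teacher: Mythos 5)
Your proposal is correct, but it follows a genuinely different route to the identification step than the paper does. The paper first uses monotonicity ($K_{\Omega^j}\le K_{\Omega_0}$ on a relatively compact $\Omega_0$) and Cauchy--Schwarz to extract a subsequence of the full kernels converging locally uniformly on $\Omega\times\Omega$ to some $K_\infty$, and then identifies $K_\infty(\cdot,w)$ with $K_\Omega(\cdot,w)$ in one stroke via the unique $L^2$-minimizing property: Fatou's lemma, the reproducing identity $\int_{\Omega^j}|K_{\Omega^j}(\cdot,w)|^2\,d\lambda=K_{\Omega^j}(w)$, and a comparison against the dilated competitor $g_j(z)=f\bigl(z/(1+1/j)\bigr)/f\bigl(w/(1+1/j)\bigr)$ built from an arbitrary $f\in A^2(\Omega)$ with $f(w)=1$ --- this is precisely where star-convexity and the continuity argument $f\bigl(w/(1+1/j)\bigr)\neq 0$ enter. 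You instead establish pointwise convergence on the diagonal first --- the lower bound by the exact transformation rule for the Bergman kernel under the homothety $S_\epsilon$ combined with domain monotonicity, the upper bound by the classical Ramadanov device of a normal family of extremal functions on the $\Omega^j$ --- and then upgrade to locally uniform convergence of $K_{\Omega^j}(z,w)$ via Montel for sesqui-holomorphic functions and the polarization fact that such a function is determined by its diagonal restriction (with the identity theorem on the connected set $\Omega\times\Omega$ to globalize the local coefficient comparison). Each route buys something: your transformation-rule lower bound dispenses with competitor functions altogether and in fact never needs star-convexity, since for $w\in\Omega^j$ the inclusion $S_\epsilon(w)\in\Omega$ already follows from hypothesis (ii) --- so your aside attributing that inclusion to star-convexity is redundant rather than wrong, and your argument works under slightly weaker hypotheses than stated; the paper's route avoids the uniqueness-from-the-diagonal lemma and obtains the off-diagonal limit directly from the extremal characterization. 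The remaining ingredients --- sub-mean-value and monotonicity bounds, Cauchy--Schwarz, Fatou or monotone convergence along an exhaustion, and the subsequence trick to pass from subsequential to full convergence --- are common to both arguments.
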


\begin{proof}
Without loss of generality we may assume that the point $q$ is the origin. 
If $\Om_0$ is a relatively compact sub-domain of $\Omega$, then $\Om_0 \subset \Omega^j$ for all $ j $ large. It follows that 
\begin{equation} \label{E5}
K_{\Omega^j}(z) \leq K_{\Omega_0}(z)
\end{equation}
for $ z \in \Om_0$ and for all $ j $ large. Also, recall that
\begin{equation} \label{E4}
\vert K_{\Omega^j}(z,w)\vert \leq \sqrt{K_{\Omega^j}(z)} \sqrt{K_{\Omega^j}(w)}
\end{equation}
for all $ z, w \in \Omega^j $ and for each $ j $. It follows from \eqref{E5} and \eqref{E4} that the sequence $\{K_{\Omega^j}\}$ is locally uniformly 
bounded on $ \Omega \times \Omega$, which provides a subsequence that converges locally uniformly to a function, say, $K_{\infty} $ on $ \Omega \times \Omega$. 

\medskip

\noindent The final step is to show that $K_{\infty}=K_{\Om}$ using the unique minimizing property of the Bergman kernel. To achieve this, fix $w\in \Om$ and 
note that $\Om^j \subset 2 \Omega $ for all large $j $. It follows that
$$ 
K_{\Omega^j}(w) \geq K_{2\Om}(w) > 0.
$$
for all $ j $ large, which in turn implies that 
\[
 K_{\infty}(w) \geq K_{2 \Omega}(w) > 0.
\]
Now let $f\in A^2(\Om)$ with $f(w)=1$. By Fatou's lemma, we obtain
\begin{alignat*}{3}
\int_{\Om_0}  \left\vert \frac{K_{\infty}(z,w)}{K_{\infty}(w)} \right\vert^2 \, d \lambda(z) & 
\leq \liminf_{j\to \infty} \int_{\Om_0} \left\vert \frac{K_{\Om^j}(z,w)}{K_{\Om^j}(w)}\right\vert^2 \, d \lambda(z) \;
 & \leq \liminf_{j \to \infty} \int_{\Om^j} \left\vert \frac{K_{\Om^j}(z,w)}{K_{\Om^j}(w)}\right\vert^2 \, d \lambda(z).
\end{alignat*}
Moreover, $ K_{\Omega^j} ( \cdot, z) $ reproduces the functions of $ A^2 (\Omega^j) $ and hence
\begin{alignat*}{3}
 \int_{\Om^j} \left\vert \frac{K_{\Om^j}(z,w)}{K_{\Om^j}(w)}\right\vert^2 \, d \lambda(z) =\frac{1}{K_{\Om^j}(w)}.
\end{alignat*}
Also, it follows from \eqref{E6} that
\begin{alignat*}{3}
 K_{(1+1/j)\Om}(w) \leq K_{\Om^j}(w)
\end{alignat*}
for each $ j $ and hence
\begin{alignat}{3} \label{int-K}
\int_{\Om_0}  \left\vert \frac{K_{\infty}(z,w)}{K_{\infty}(w)} \right\vert^2 \, d \lambda(z) \leq \liminf_{j \to \infty} \frac{1}{K_{(1+1/j)\Om}(w)}.
\end{alignat}
To find an upper bound for the right hand side above, set
\[
g_j(z)=\frac{f\left(\frac{z}{1+1/j}\right)}{f\left(\frac{w}{1+1/j}\right)},
\]
for $z \in (1+1/j)\Om$. Note that $f\left(\frac{w}{1+1/j}\right)\neq 0$ by the continuity of $f$ and hence $g_j$ is well-defined for all $ j $ large. Also, 
$g_j(w)=1$. Therefore,
\[
\frac{1}{K_{(1+1/j)\Om}(w)} \leq \int_{(1+1/j)\Om} \vert g_j(z) \vert^2 \; d \lambda(z) = 
\frac{(1+1/j)^{2n}}{\left\vert f\left(\frac{w}{1+1/j}\right)\right\vert^2}  \int_{\Om} \vert f(\z)\vert^2 d\lambda(\z).
\]
Combining the above observation with \eqref{int-K}, it follows that
\[
\int_{\Om_0}  \left\vert \frac{K_{\infty}(z,w)}{K_{\infty}(w)} \right\vert^2 \, d \lambda(z) \leq  \int_{\Om} \vert f(\z)\vert^2 d\lambda(\z).
\]
Since $\Om_0$ is an arbitrary compact subset of $ \Omega $, we obtain 
\[
\int_\Om \left\vert \frac{K_{\infty}(z,w)}{K_{\infty}(w)} \right\vert^2 \, d \lambda (z)  \leq \int_{\Om} \vert f(\z)\vert^2 d \lambda(\z),
\]
for every $ f \in A^2(\Omega) $ with $ f(w)=1 $. It follows from the minimizing property of the Bergman kernel function 
that $K_{\infty}(z,w)=K_{\Omega}(z,w)$. The above argument also shows that any convergent subsequence of $K_{\Omega^j}$ has limit $K_\Om$ and 
hence $K_{\Omega^j}$ itself converges to $K_\Om$.
\end{proof}

\noindent Another version of Ramanadov-type convergence theorem is stated below.
\begin{lem}\label{ramadanov-2}
Let $\{\Om^j\}$ be a sequence of domains in $\mathbb{C}^n$ that converges to a domain $\Om \subset \mathbb{C}^n$ in the following way:
\begin{enumerate}[(i)]
\item if $ S $ is a compact subset of $ \Omega $, then $ S \subset \Omega^j $ for all $ j $ large, and
\item there is a unit vector $v$ such that for every $\ep>0$, the translate $\Om+\ep v$ contains $\Om$ and also $\Om^{j}$ for $j$ large.
\end{enumerate}
Assume further that $K_\Om$ is non-vanishing along the diagonal. Then $K_{\Om^j} \to K_{\Om}$ uniformly on compact subsets of $\Om \times \Om$.
\end{lem}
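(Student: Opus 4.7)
The plan is to mimic the proof of Lemma \ref{ramadanov} almost verbatim, with the dilation-based comparison domain $(1+\ep)\Omega$ replaced by the translated domain $\Omega+\ep v$ supplied by hypothesis (ii). Condition (i) still yields that $\{K_{\Omega^j}\}$ is locally uniformly bounded on $\Omega\times\Omega$, so by a normal families argument I can extract a locally uniformly convergent subsequence with limit $K_\infty$; it then suffices to show $K_\infty = K_\Omega$, since uniqueness of subsequential limits forces the full sequence to converge.

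To see that $K_\infty$ does not vanish on the diagonal, note that (ii) gives $\Omega^j\subset\Omega+\ep v$ for $j$ large and, in particular, $\Omega-\ep v\subset\Omega$. Monotonicity and translation invariance of the Bergman kernel yield
\[
K_{\Omega^j}(w) \ge K_{\Omega+\ep v}(w) = K_\Omega(w-\ep v) > 0,
\]
and letting first $j\to\infty$ and then $\ep\to 0$ gives $K_\infty(w) \ge K_\Omega(w) > 0$. Fatou's lemma together with the reproducing property of $K_{\Omega^j}$, applied exactly as in Lemma \ref{ramadanov}, then produces
\[
\int_{\Omega_0}\left|\frac{K_\infty(z,w)}{K_\infty(w)}\right|^2 d\lambda(z) \le \liminf_{j\to\infty}\frac{1}{K_{\Omega^j}(w)} \le \frac{1}{K_{\Omega+\ep v}(w)}
\]
for any compact $\Omega_0\subset\Omega$ and any $\ep>0$.

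The key new input is a translated test function in place of the dilated one used previously. Given $f\in A^2(\Omega)$ with $f(w)=1$, set
\[
g_\ep(z) = \frac{f(z-\ep v)}{f(w-\ep v)}
\]
for $z\in\Omega+\ep v$; this is well-defined and holomorphic there because $z-\ep v\in\Omega$ whenever $z\in\Omega+\ep v$, and because $f(w-\ep v)\to f(w)=1$ for small $\ep$ by continuity. A translation change of variable gives
\[
\int_{\Omega+\ep v}|g_\ep|^2\,d\lambda = \frac{1}{|f(w-\ep v)|^2}\int_\Omega |f|^2\,d\lambda,
\]
so the right side of the previous display is bounded by $|f(w-\ep v)|^{-2}\int_\Omega |f|^2\,d\lambda$. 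Letting $\ep\to 0$ and exhausting $\Omega$ by compact $\Omega_0$ yields $\int_\Omega |K_\infty(\cdot,w)/K_\infty(w)|^2\,d\lambda \le \int_\Omega |f|^2\,d\lambda$ for every admissible $f$, at which point the minimizing property of the Bergman kernel concludes the identification as in the previous lemma. The only obstacle worth flagging is the correct reading of hypothesis (ii): the condition $\Omega\subset\Omega+\ep v$ is precisely what guarantees that $f(\,\cdot-\ep v)$ is a bona fide holomorphic function on $\Omega+\ep v$, which is what makes the translation trick viable. Once this is noted, the argument is structurally identical to Lemma \ref{ramadanov}.
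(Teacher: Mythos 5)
Your proof is correct and follows essentially the same route as the paper, which likewise handles this lemma by repeating the argument of the first Ramadanov-type lemma with the translate $\Om+\ep v$ in place of the dilate $(1+\ep)\Om$ and the translated competitor $f(z-\ep v)/f(w-\ep v)$ in place of the dilated one. The only (immaterial) difference is bookkeeping: you keep $\ep$ fixed, let $j\to\infty$, and then let $\ep\to 0$, whereas the paper couples the two by working with $\Om+v/j$.
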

\begin{proof}
Since the proof is exactly similar to previous case, we only outline the necessary modifications. Proceed as above, while working
with $\Om+v/j$ instead of $(1+1/j)\Om$. The analogous function $g_j$ on $\Om+v/j$ will be defined as
\[
g_j(z)=\frac{f\left(z-v/j\right)}{f\left(w-v/j\right)}
\]
and observe that
\[
\int_{\Om+v/j} \vert g(z) \vert^2 d \lambda(z)=\frac{1}{\left\vert f\left(w-v/j\right)\right\vert^2}  \int_{\Om} \vert f(\z)\vert^2 d \lambda(\z)
\]
to conclude.
\end{proof}

\subsection{Stability of the Carath\'{e}odory and Kobayashi indicatrices} Scale the strongly pseudoconvex domain $ D $ with respect to the 
base point $ p^0 \in \partial D $ and 
the sequence $ \zeta^j $. Let $ D^j $ and $ D_{\infty} $ be as described before. The first step towards establishing the stability of the indicatrices is to 
control $ k_{D^j}(\cdot, \cdot) $ and $ c_{D^j}( \cdot, \cdot) $ as $ j \rightarrow \infty $. 

\begin{lem} \label{met-conv}
For $ (z,v) \in D_{\infty} \times \mathbb{C}^n $, and for $ \tau = c, a $ and $ k$,
\begin{equation}\label{E2}
  \tau_{D^j}(z,v) \rightarrow \tau_{D_{\infty}} (z,v).
\end{equation}
Moreover, the convergence is uniform on compact sets of $ D_{\infty} \times \mathbb{C}^n $.
\end{lem}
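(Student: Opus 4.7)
The plan exploits the fact that $D_{\infty}$ is biholomorphic to the unit ball $\mathbb B^n$ via the Cayley transform \eqref{E1}, so the three invariant metrics agree on $D_{\infty}$: $c_{D_{\infty}} = a_{D_{\infty}} = k_{D_{\infty}}$. Combined with the universal chain $c \leq a \leq k$, it is enough to prove the two one-sided estimates
\[
\limsup_{j \to \infty} k_{D^j}(z, v) \;\leq\; k_{D_{\infty}}(z, v), \qquad \liminf_{j \to \infty} c_{D^j}(z, v) \;\geq\; c_{D_{\infty}}(z, v),
\]
after which the three sequences $\tau_{D^j}(z, v)$ for $\tau \in \{c, a, k\}$ are squeezed to the common value $\tau_{D_{\infty}}(z, v)$.

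For the Kobayashi upper bound I would use the disc-shrinking trick: given $\eta > 0$, fix a holomorphic disc $f: \mathbb D \to D_{\infty}$ with $f(0) = z$, $df_0(1) = v / t$, and $t < k_{D_{\infty}}(z, v) + \eta$. For any $r < 1$, the compact set $f(r \overline{\mathbb D})$ lies in $D_{\infty}$, hence in $D^j$ for $j$ large by the local Hausdorff convergence. The rescaled disc $\zeta \mapsto f(r \zeta)$ then yields $k_{D^j}(z, v) \leq t / r$; sending $j \to \infty$, then $r \to 1^-$ and $\eta \to 0^+$ closes this direction.

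For the Carathéodory lower bound I would pull back an extremal function from $\mathbb B^n$: let $F \in \mathcal O(\mathbb B^n, \mathbb D)$ satisfy $F(\Psi(z)) = 0$ and $|dF_{\Psi(z)}(d\Psi_z v)| = c_{\mathbb B^n}(\Psi(z), d\Psi_z v)$, and set $f = F \circ \Psi$. The resulting rational function is defined on $\mathbb C^n \setminus \{z_n = 1\}$ with $f(z) = 0$, $|df_z(v)| = c_{D_{\infty}}(z, v)$, and $|f| \leq 1$ on $\overline{D_{\infty}}$. For each $\delta \in (0, 1)$ the function $(1 - \delta) f$ has modulus strictly less than $1$ on a Euclidean open neighborhood $U_\delta$ of $\overline{D_{\infty}}$ inside the domain of $\Psi$. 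Using the boundedness of $D^j = T^j \circ \phi^j(D)$ together with the scaling expansion of its defining function (modeled on the Pinchuk normal form), one verifies $D^j \subset U_\delta$ for $j$ large, so $(1 - \delta) f$ restricts to a valid competitor on $D^j$ and gives $c_{D^j}(z, v) \geq (1 - \delta)\, c_{D_{\infty}}(z, v)$. Sending $j \to \infty$ then $\delta \to 0^+$ produces the required bound.

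The uniform convergence on compact subsets of $D_{\infty} \times \mathbb C^n$ follows from the pointwise convergence since the upper and lower bounds constructed above are locally uniform in $(z, v)$. The principal technical obstacle will be justifying the global inclusion $D^j \subset U_\delta$: the scaling expansion of the defining function is only valid locally near the scaled base point, and extending the inclusion to all of $D^j$ (whose Euclidean extent grows like $1/\delta_j$) relies on the uniform boundedness of $\phi^j(D)$ and the uniform behavior of the polynomial automorphisms $\phi^j$ furnished by \cite{Pinchuk-1980}.
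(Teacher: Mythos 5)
Your overall strategy -- squeeze everything between a Kobayashi upper estimate and a Carath\'eodory lower estimate, using $c\le a\le k$ together with $c_{D_\infty}=a_{D_\infty}=k_{D_\infty}$ -- is sound, and the Kobayashi half (restricting extremal discs to $r\overline{\mathbb D}$ and using that compact subsets of $D_\infty$ eventually lie in $D^j$) is fine; the squeeze for $a$ is exactly how the paper handles the Azukawa metric. The genuine gap is the Carath\'eodory lower bound, precisely at the inclusion $D^j\subset U_\delta$ that you yourself flag. Local Hausdorff convergence controls $D^j$ only on fixed compact sets, while the diameter of $D^j=T^j\circ\phi^j(D)$ blows up like $\delta_j^{-1}$, and the Pinchuk normal form of the defining function is valid only in a fixed small ball around $\zeta^j$ before scaling. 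Outside that ball you know nothing beyond the uniform boundedness of $\phi^j(D)$, and that is not enough: for a non-convex strongly pseudoconvex $D$ (the only case not already covered by B\l ocki--Zwonek) the affine complex tangent plane $\{z_n=0\}$ at $\zeta^j$ may re-enter $\phi^j(D)$ at distance of order $1$ from the origin; such points scale to points of $D^j$ with $w_n$ bounded (even $w_n=0$) and $\vert{}'w\vert\sim\delta_j^{-1/2}$, which leave every fixed neighbourhood of $\overline{D_\infty}$, in particular every sublevel set $\{\vert F\circ\Psi\vert<1/(1-\delta)\}$ of the pulled-back ball extremal. So $(1-\delta)f$ is in general not an admissible competitor on $D^j$, and no amount of ``uniform behaviour of the $\phi^j$'' rescues a global inclusion that is simply false for such domains.

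The paper circumvents exactly this difficulty by localisation rather than global pullback: by Graham's theorem the Carath\'eodory metric is localisable near a strongly pseudoconvex boundary point, so $c_D$ (and hence $c_{D^j}$) is asymptotically comparable to $c_{U\cap D}$ for a small neighbourhood $U$ of $p^0$; choosing $U$ so small that $U\cap D$ is strictly convex, Lempert's theorem gives $c_{U\cap D}=k_{U\cap D}$, and the convergence then follows from the Kobayashi case, for which the paper cites the scaling argument of Seshadri--Verma. If you want to salvage your route, you must insert such a localisation step (e.g.\ multiplying candidate functions by high powers of a local peak function) before pulling back extremals from the ball; as written, the $\liminf c_{D^j}\ge c_{D_\infty}$ step does not go through. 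A secondary, smaller point: the claimed uniformity on compacta of $D_\infty\times\mathbb C^n$ also needs an argument (equicontinuity or uniform choices of discs/competitors), not just the remark that the bounds are ``locally uniform''.
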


\noindent The reader is referred to \cite{Seshadri&Verma-2006} for a proof when $ \tau = k $. To verify the above lemma for $ \tau = c $, one has to essentially 
repeat the arguments presented in \cite{Seshadri&Verma-2006} for the convergence of the Carath\'{e}odory distance on $ D^j $ and hence the proof is omited here; but 
note the key ingredients - firstly, if $ U $ is a 
sufficiently small neighbourhood of $ p^0 \in \partial D $, then
$ U \cap D $ is strictly convex and hence it follows from Lempert's work that $ c_{U \cap D} = k_{U \cap D} $. Secondly, the Carath\'{e}odory
metric can be localised near strongly pseudoconvex boundary points (see, for example, \cite{Graham}). 

\medskip 

\noindent Recall that 
\begin{equation*}
c_{D^j}(z,v) \leq a_{D^j}(z,v) \leq k_{D^j}(z,v)
\end{equation*}
for each $ j $ and $ c_{D^j}(z,v) \rightarrow c_{D_{\infty}}(z,v) $ and $ k_{D^j}(z,v) \rightarrow k_{D_{\infty}}(z,v) $. Moreover, since the limit 
domain $ D_{\infty} $ is biholomorphic to $ \mathbb{B}^n $, it follows that 
\[
c_{D_{\infty}}\equiv a_{D_{\infty}} \equiv k_{D_{\infty}}. 
\]
As a consequence, $ a_{D^j}(z,v) $ converges to $a_{D_{\infty}}(z,v)$. This completes the proof of Lemma \ref{met-conv}.

\medskip

\noindent The next step is a stability statement for the indicatrices of the scaled domains. As remarked earlier, it suffices to prove
the following lemma for $ \tau = c $ and $ k $. Here, we provide a proof for $ \tau = k $. It can be checked that 
the proof given below applies verbatim in the case $ \tau = c $. 

\begin{lem} \label{ind-conv} 
Let $ \tau = c, a $ and $ k$. Then for $z$ in any compact subset $S$ of $D_{\infty} $,
\begin{enumerate}
\item [(i)] $ I^{\tau}_{D^j} (z) $ is uniformly compactly contained in $ \mathbb{C}^n $ for all $ j $ large,
\item [(ii)] the indicatrices $ I^{\tau}_{D^j}(z) $ converge uniformly in the Hausdorff sense to $ I^{\tau}_{D_{\infty}}(z)$,
\end{enumerate}
and for each $z \in D$,
\begin{enumerate}
\item [(iii)]  the functions $\la\big(I^{\tau}_{D^j}(z)\big)$ converge to $\la\big(I^{\tau}_{D_{\infty}}(z)\big)$.
\end{enumerate}
\end{lem}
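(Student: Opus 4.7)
The plan is to leverage the uniform convergence $\tau_{D^j}(z,v) \to \tau_{D_\infty}(z,v)$ on compacta of $D_\infty \times \mbb C^n$ supplied by Lemma \ref{met-conv}, combined with the circular homogeneity $\tau_D(z,\la v) = |\la|\,\tau_D(z,v)$ for $\la \in \mbb C$. The essential structural input is that, since $D_\infty$ is biholomorphic to $\mbb B^n$, the function $v \mapsto \tau_{D_\infty}(z,v)$ is a genuine norm on $\mbb C^n$ for every $z \in D_\infty$ (not merely a seminorm), and this norm varies continuously with $z$.

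For (i), by continuity of $\tau_{D_\infty}$ and compactness of $S \times \{|v|=1\}$, there exists $c_0 > 0$ with $\tau_{D_\infty}(z,v) \ge c_0$ on that set. Lemma \ref{met-conv} then yields $\tau_{D^j}(z,v) \ge c_0/2$ on $S \times \{|v|=1\}$ for $j$ large, and homogeneity upgrades this to $\tau_{D^j}(z,v) \ge (c_0/2)|v|$ for every $v \in \mbb C^n$. Hence $I^{\tau}_{D^j}(z) \subset \{|v| < 2/c_0\}$, uniformly in $z \in S$.

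For (ii), let $B \subset \mbb C^n$ be a fixed open ball containing $I^{\tau}_{D^j}(z)$ for $z \in S$ and $j$ large, as well as $I^{\tau}_{D_\infty}(z)$ for $z \in S$; such a $B$ exists by (i). Given $\ep > 0$, Lemma \ref{met-conv} applied on $S \times \ov B$ furnishes $|\tau_{D^j}(z,v) - \tau_{D_\infty}(z,v)| < \ep$ for $j$ large. If $v \in I^{\tau}_{D^j}(z)$, then $\tau_{D_\infty}(z,v) < 1 + \ep$, so by homogeneity $\tau_{D_\infty}(z, v/(1+\ep)) < 1$, i.e.\ $v \in (1+\ep)\, I^{\tau}_{D_\infty}(z)$; symmetrically $(1-\ep)\,I^{\tau}_{D_\infty}(z) \subset I^{\tau}_{D^j}(z)$. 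Since all sets in sight are bounded, this yields uniform Hausdorff convergence in $z \in S$.

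For (iii), the same sandwich
\[
(1-\ep)\,I^{\tau}_{D_\infty}(z) \subset I^{\tau}_{D^j}(z) \subset (1+\ep)\,I^{\tau}_{D_\infty}(z)
\]
and the scaling $\la(rA)=r^{2n}\la(A)$ give
\[
(1-\ep)^{2n}\, \la\big(I^{\tau}_{D_\infty}(z)\big) \le \la\big(I^{\tau}_{D^j}(z)\big) \le (1+\ep)^{2n}\, \la\big(I^{\tau}_{D_\infty}(z)\big),
\]
whence $\la(I^{\tau}_{D^j}(z)) \to \la(I^{\tau}_{D_\infty}(z))$. The principal obstacle is (i): for $\tau = k$ one could alternatively appeal to inclusion monotonicity, using that $D^j$ eventually contains a uniform Euclidean ball about every point of $S$, but since $c$ is the smallest of the three metrics, such a direct geometric comparison is unavailable for $\tau = c$. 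Routing through Lemma \ref{met-conv} together with the non-degeneracy of $\tau_{D_\infty}$ as a norm is precisely what lets the argument run uniformly for $\tau = c, a, k$.
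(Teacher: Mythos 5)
Your argument is correct and, for parts (i) and (ii), it is essentially the paper's proof: the paper also gets the uniform lower bound $k_{D_\infty}(z,v)\ge C\vert v\vert$ on $S$ (via the explicit Cayley-transform formula rather than your compactness-of-$S\times\{\vert v\vert=1\}$ phrasing), transfers it to $k_{D^j}$ by Lemma \ref{met-conv} and homogeneity, and then deduces the same sandwich $(1-\ep)I^{\tau}_{D_\infty}(z)\subset I^{\tau}_{D^j}(z)\subset(1+\ep)I^{\tau}_{D_\infty}(z)$; like you, it exploits that $c_{D_\infty}=a_{D_\infty}=k_{D_\infty}$ so one argument covers all three metrics. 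The only genuine (and minor) divergence is in (iii): the paper proves pointwise $\la$-a.e.\ convergence of the characteristic functions $\chi_{I^{\tau}_{D^j}(z)}$ and invokes dominated convergence, which requires observing that $\{v: k_{D_\infty}(z,v)=1\}$ is Lebesgue-null, whereas you reuse the sandwich together with $\la(rA)=r^{2n}\la(A)$, which sidesteps the null-set observation and in fact yields convergence of the volumes uniformly for $z$ in compacts — a slightly cleaner and marginally stronger ending, at no extra cost.
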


\begin{proof} To establish (i), recall that for $ v \in \mathbb{C}^n $, 
\begin{equation}\label{kob-limit}
 k_{D_{\infty}}(z,v) = k_{\mathbb{B}^n} \left( \Psi(z), d \Psi(z) v \right) = \left( \frac{|d\Psi(z) v|^2}{1- |\Psi(z)|^2} + 
 \frac{| \langle \Psi(z), d \Psi(z) v \rangle |}{\left(1- |\Psi(z)|^2\right)^2} \right)^{1/2}, 
\end{equation}
where $ \Psi $ is as described by \eqref{E1} and $ \langle \cdot, \cdot \rangle $ denotes the standard Hermitian inner product in $ \mathbb{C}^n $.
It follows that there is a uniform positive constant  $ C $ (depending only on $ S $) such that
\[
k_{D_\infty}(z,v) \geq C \vert v\vert
\]
for all $v \in \mf{C}^n$.

\medskip

\noindent Applying Lemma \ref{met-conv}, we see that 
\[
 \left| k_{D_\infty} (z,v) - k_{D^j}(z,v) \right| < \frac{C}{2}  \vert v \vert,
\]
and therefore
\begin{equation} \label{E3}
k_{D^j}(z,v) \geq \frac{ C }{2} \vert v\vert
\end{equation}
for all $z \in S$  and vectors $ v \in \mathbb{C}^n $ with $\vert v \vert=1$ and for all $ j $ large. Since $ k_{D^j}(z, \cdot) $ is homogeneous, the inequality \eqref{E3} holds for all $v \in \mathbb{C}^n$
and for all $ j $ large. A consequence of all of this is that, for all $ j $ large, the indicatrices $I^k_{D^j}(z)$ are contained in 
$ B \left(0, 2/C \right) \subset \mathbb{C}^n $, the Euclidean ball
centred at the origin and radius $ 2/C $, as required.
 
\medskip
 
\noindent To prove (ii), fix a ball $B= B(0,R) \subset \mathbb{C}^n $ containing $ I^{k}_{D^j} (z) $ for all $z \in S$ and $ j $ large, existence of which is guaranteed by (i). Let $\ep>0$. Suppose that $ v \in B $ and 
$ k_{D_{\infty}} (z,v) < 1-\ep/R$ (respectively $ > 1 + \ep/R$). Since $ k_{D^j}(z, v) \to k_{D_{\infty}} (z,v) $ 
uniformly on $ S \times B $, it follows that $ k_{D^j}(z,v) < 1 - \ep/2R$ (respectively $ > 1 + \ep/2R $) for all $j $ large. In particular, it follows that 
\[
(1-\ep/R)I^k_{D_{\infty}}(z)\subset I^k_{D^j}(z) \subset (1+\ep/R) I^k_{D_{\infty}}(z) 
\]
for all $z \in S$ and $j $ large which implies that the Hausdorff distance between $I^k_{D_{\infty}}(z)$ and $I^k_{D_j}(z)$ is less than or equal to $\ep$.

\medskip

\noindent For (iii), denote by $ \chi_A $, the indicator function of a subset $ A $ of $ \mathbb{C}^n $. Observe that $ \chi_{I^k_{D^j}(z)} $ 
converge pointwise $ \lambda$-almost everywhere to $ \chi_{I^k_{D_{\infty}}(z)} $. Indeed, if $ k_{D_{\infty}}(z,v)<1 $ (respectively $>1$) for 
some fixed $ v \in \mathbb{C}^n $, then $ k_{D^j}(z,v) < 1$ (respectively $>1$) for all $ j $ large.

\medskip

\noindent Moreover, as a
consequence of \eqref{kob-limit}, the set $ \{ v \in \mathbb{C}^n : k_{D_{\infty}}(z,v)=1 \} $ has zero Lebesgue measure. 

\medskip

\noindent The proof of (iii) now follows from the dominated convergence theorem.
\end{proof}

\noindent To conclude the proof of Theorem \ref{spscvx}, note that by Lemmas \ref{ramadanov} and \ref{ind-conv},
\begin{equation*}
K_{D^j}(p^*) \to K_{D_{\infty}}(p^*) \quad \text{and} \quad \la \big(I^{\tau}_{D^j}(p^*)\big) \to 
\la\big(I^{\tau}_{D_{\infty}} (p^*)\big).
\end{equation*}
Combining the above observations with \eqref{F-scaling} yields
\begin{equation*}
 F^{\tau}_D (p^j) \rightarrow F^{\tau}_{D_{\infty}} (p^*).
\end{equation*}
But $ D_{\infty} $ is biholomorphic to $ \mathbb{B}^n $ and hence
\begin{equation*}
 F^{\tau}_{D_{\infty}} (p^*) =  F^{\tau}_{\mathbb{B}^n} \left( ('0,0) \right) = 1,  
\end{equation*}
so that $ F^{\tau}_D (p^j) \rightarrow 1 $ as $ j \rightarrow \infty $.

\section{Localisation result}

\noindent It should be noted that $ F^{\tau}_D $ can be localised much like the invariant metrics $ \tau = c, a $ and $ k $ near peak points.


\begin{prop}\label{localisation2}
Let $ D \subset \mathbb{C}^n $ be a $ C^2$-smooth strongly pseudoconvex bounded domain and let $ p^0 \in \partial D$. Then for a sufficiently small 
neighbourhood $ U $ of $ p^0 $, and for $ \tau= c, a $ and $k $, 
\[
 \lim_{U \cap D \ni z \rightarrow p^0} \frac{F^{\tau}_{U \cap D} (z)}{F^{\tau}_D(z)} = 1.
\]
\end{prop}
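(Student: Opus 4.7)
The plan is to show $F^{\tau}_{U\cap D}(z)\to 1$ as $z\to p^0$ by running the scaling argument of Section 2 with $U\cap D$ in place of $D$. Once this is established, combining it with $F^{\tau}_D(z)\to 1$ from Theorem \ref{spscvx} yields the claim, since the ratio of two quantities both tending to $1$ tends to $1$. Fix $U$ with $\overline U$ contained in the neighbourhood of $p^0$ on which the Pinchuk normal form of Section 2 is valid. Given $p^j\in U\cap D$ with $p^j\to p^0$, let $\zeta^j\in\partial D$ be the closest boundary point; for $j$ large, $\zeta^j\in U\cap\partial D$. Apply the same automorphisms $\phi^j$ and dilations $T^j$ as in Section 2 to both $D$ and $U\cap D$, producing $D^j=T^j\circ\phi^j(D)$ and $(U\cap D)^j=T^j\circ\phi^j(U\cap D)$. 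Since the $\phi^j$ converge locally uniformly to an automorphism sending $p^0$ to the origin while $\partial U$ stays at positive distance from $p^0$, the set $\phi^j(\partial U)$ remains uniformly bounded away from the origin; the anisotropic dilations $T^j$ then push it off to infinity. Consequently, on any fixed compact $S\subset\mathbb{C}^n$ one has $(U\cap D)^j\cap S=D^j\cap S$ for all $j$ large, so that $(U\cap D)^j$ Hausdorff-converges to $D_\infty$ on compacta exactly like $D^j$ does.

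With this local convergence in hand, each ingredient from Section 2 applies to $(U\cap D)^j$ as well. The Ramadanov-type Lemma \ref{ramadanov} goes through: its containment hypothesis $(U\cap D)^j-q\subset(1+\ep)(D_\infty-q)$ follows from the corresponding inclusion for $D^j$ (implicit in the proof of Theorem \ref{spscvx}) together with $(U\cap D)^j\subseteq D^j$, while $D_\infty$ is convex and hence star convex with respect to any interior point $q$. The metric convergence Lemma \ref{met-conv} carries over verbatim, since its key inputs---local convexifiability near $p^0$ and localisation of $c,a,k$ at strongly pseudoconvex boundary points---are properties that $U\cap D$ inherits from $D$ near $p^0$. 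The indicatrix stability Lemma \ref{ind-conv} then follows formally. Combining these,
\[
F^{\tau}_{U\cap D}(p^j)=K_{(U\cap D)^j}(p^*)\,\la\bigl(I^{\tau}_{(U\cap D)^j}(p^*)\bigr)\longrightarrow K_{D_\infty}(p^*)\,\la\bigl(I^{\tau}_{D_\infty}(p^*)\bigr)=F^{\tau}_{\mathbb{B}^n}('0,0)=1,
\]
so the ratio $F^{\tau}_{U\cap D}(p^j)/F^{\tau}_D(p^j)$ tends to $1$.

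The main point requiring care is the quantitative Ramadanov containment for $(U\cap D)^j$, which reduces to the same containment for $D^j$; this, in turn, is a byproduct of the strongly pseudoconvex scaling analysis underlying Theorem \ref{spscvx}, and is the reason for insisting that $U$ be chosen inside the Pinchuk-normal neighbourhood. Once this is in place everything else is a routine reuse of the machinery of Section 2, so the proposition essentially rides along with the proof of Theorem \ref{spscvx} applied to $D$ and $U\cap D$ in parallel.
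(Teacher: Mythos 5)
Your argument is correct, but it takes a genuinely different route from the paper. The paper's proof is a one-liner: near a strongly pseudoconvex boundary point the Bergman kernel localises (H\"ormander), and so do the Carath\'eodory and Kobayashi metrics (Graham) and the Azukawa metric (Nikolov), uniformly in the vector variable; since $U\cap D\subset D$ also gives the trivial inequalities $K_{U\cap D}\ge K_D$ and $\tau_{U\cap D}\ge\tau_D$, the quotients $K_{U\cap D}(z)/K_D(z)$ and $\lambda\big(I^{\tau}_{U\cap D}(z)\big)/\lambda\big(I^{\tau}_D(z)\big)$ each tend to $1$, and multiplying them gives the proposition directly, with no scaling and no appeal to Theorem \ref{spscvx}. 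You instead rerun the Section 2 scaling for $U\cap D$ to get $F^{\tau}_{U\cap D}(z)\to 1$ and divide by $F^{\tau}_D(z)\to 1$; this is legitimate precisely because the common limit is $1\neq 0$, and your key checks are the right ones: $(U\cap D)^j$ and $D^j$ coincide on any fixed compact set for $j$ large, and the Ramadanov containment hypothesis for $(U\cap D)^j$ follows from the one for $D^j$ via $(U\cap D)^j\subseteq D^j$ (note Theorem \ref{spscvx} cannot simply be quoted for $U\cap D$, which is not globally $C^2$-smooth, so rerunning the scaling is indeed necessary). One point to tighten: Lemma \ref{met-conv} does not transfer ``verbatim,'' since the Seshadri--Verma and Graham inputs are stated for smooth strongly pseudoconvex domains and $U\cap D$ has corners where $\partial U$ meets $\partial D$; but you do not need those inputs, because monotonicity gives $\tau_{(U\cap D)^j}\ge\tau_{D^j}\to\tau_{D_\infty}$, the upper bound for $k_{(U\cap D)^j}$ follows from the local Hausdorff convergence (analytic discs compactly contained in $D_\infty$ eventually map into $(U\cap D)^j$), and $c\le a\le k$ together with equality of the three metrics on $D_\infty$ closes the sandwich. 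In comparison, the paper's argument is shorter, yields the localisation factor-by-factor (kernel and indicatrix volume separately), and does not depend on knowing the boundary limit of $F^{\tau}_D$; your argument is heavier but stays entirely within the scaling framework already built for Theorem \ref{spscvx} and avoids invoking the Azukawa localisation result.
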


\noindent This is immediate from the localisation properties of the Bergman kernel (see, for example, \cite{Hor}), 
the Azukawa metric (\cite{Nik}), the Carath\'{e}odory and the Kobayashi metrics (cf. \cite{Graham}) 
respectively, and hence the proof is omitted here.

\section{Concluding remarks.}
\noindent We would like to conclude this article with remarks about the boundary behaviour of
$F_D$ for $D$ varying through the increasing scale of egg domains, 
\[
E_{2\mu}=\{(z,w) \in \mathbb{C}^2 \; : \; \vert z \vert^2 + \vert w \vert^{2 \mu} <1\},
\]
where $\mu$ varies over the set of all positive real numbers. On convex eggs, i.e., $E_{2 \mu}$ for
$\mu \geq 1/2$, explicit expressions for $F_{E_{2\mu}}$ were obtained by B\l ocki -- Zwonek in \cite{BZ1}. We will rephrase what is already known from \cite{BZ1} about the convex case from the viewpoint of boundary behaviour and try to extend and tie it up with the non-convex case, to the extent we can. 

\medskip

To begin with, egg domains form perhaps the simplest class on which one may perform a concrete case study. While all the eggs in this scale (with the parameter $\mu$ varying through the scale of \textit{all} positive reals) are complete Reinhardt domains of holomorphy,
the scale includes a wide range of variety in their boundary geometry, containing for instance, the simplest model domains for smoothly bounded pseudoconvex domains of finite type (which happens when $\mu \in \mathbb{N}$). For non-integer values of $\mu$, the order of smoothness of 
$\partial E_{2 \mu}$ is $C^{[2\mu]}$; in short, all possible degrees of boundary smoothness, is represented in the scale. In particular, let us spell out that the degree of smoothness of the boundary is less than $C^1$ as soon as $m<1/2$. Moreover,
as soon as the value of the parameter $\mu$ drops below the threshold value $1/2$, the domains $E_{2 \mu}$ cease to be convex. 

\medskip

A common feature shared by all the $E_{2 \mu}$ -- whether $\mu<1/2$ or not --  that they are all bounded domains with non-compact automorphism group. 
Recall the classical theorem of H. Cartan that for any bounded domain $D$ in $\mathbb{C}^n$
its automorphism group is a real Lie group which is non-compact if and only if one -- and hence every -- of its orbits is non-compact. Therefore, to study the behaviour of a biholomorphically invariant function on a bounded domain $D$ with non-compact automorphism group, such as $F_D$ (whose values remain invariant along orbits), it is enough to study its boundary limits. 

\medskip

\noindent The aforementioned general fact about the non-compactness of the orbits means that every orbit accumulates on the boundary. Where and how they accumulate depends on the domain under consideration. For eggs domains this is known and may be recalled as follows. Firstly, view any particular egg $E_{2 \mu}$ as a disjoint union of its orbits (of the action of ${\rm Aut}(E_{2 \mu})$ on $E_{2 \mu}$). It is possible to mark off a convenient representative point for each orbit, the simplest of which is needless to say, the origin. The orbit of the origin is given by the intersection with $E_{2 \mu}$, of the
 complex hyperplane $\{z \in \mathbb{C}^2 \; : \; z_2=0\}$; the closure of this orbit meets 
 $\partial E_{2 \mu}$ precisely along its non-strongly pseudoconvex points. The
choice of convenient points for other orbits is facilitated by the fact that 
orbits of points of the form $(0,p)$ as $p$ varies in the interval $[0,1)$ exhaust $E_{2\mu}$ and moreover, no pair of such points belong to the same orbit. This can be seen explicitly by working with the specific automorphisms of these egg domains. We shall therefore refer to these points as `representative points' for these eggs and denote the set of such points by $S$, i.e.,
\[
S=\big\{(0,p) \in \mathbb{C}^2 \; :\; p \in \mathbb{R} \text{ with } 0\leq p<1\big\},
\]
which is contained in $E_{2\mu}$ for any $\mu$. To study any invariant function on any of these eggs therefore, it suffices to restrict attention to this segment $S$. In particular,
as $F_{E_{2 \mu}}$ is biholomorphism/automorphism-invariant, it is constant along any of the orbits and the range of $F_{E_{2 \mu}}$ equals $F(S)$.
Finally, we recall that any non-strongly pseudoconvex boundary point $q \in \partial E_{2 \mu}$ is a boundary-orbit-accumulation point for all orbits. We conclude that: for any of the non-strongly pseudoconvex points $q$ in $\partial E_{2 \mu}$, any neighbourhood $U$ (in $\mathbb{C}^2$), of $q$ -- howsoever small -- captures all the values 
attained by $F(z)$ as $z$ varies throughout $E_{2 \mu}$:
\[
F(E_{2 \mu} \cap U) = F(E_{2 \mu})=F(S).
\]


In particular, one of the boundary limits gives the value $1$, as obtained when we approach $q$ along the orbit of the origin $Z$; indeed, $F_{E_{2 \mu}}$ is constant on $Z$ and equals $1$ as the Bergman kernel function at the origin of any complete Reinhardt domain $R$ equals the inverse of the volume of $R$ and the Kobayashi indicatrix at the origin for $R$ is a copy of $R$ itself.
To highlight the main point here, first note that while the boundary limit of $F(z)$ as $z$ approaches a strongly pseudoconvex point on the boundary a domain always exists and equals $1$
as is guaranteed by Theorem \ref{spscvx}, the boundary limit of $F$ certainly fails to exist at any of the non-strongly pseudoconvex points on the boundary of domains as simple as convex eggs.


\medskip

\noindent  Although we do not obtain precise explicit expressions for $F_{E_{2\mu}}$ as in \cite{BZ1}, which as mentioned therein is already complicated,
we would like to state some estimates, even if coarse, for the `low-regularity' cum non-convex case i.e., for $F_{E_{2\mu}}$ when $\mu<1/2$. Owing to the structure of the orbits in the egg domains described above and as in \cite{BZ1}, it suffices to estimate the invariant function along the thin segment $S$. However, even this can be complicated for although explicit expressions for the Kobayashi metric is known for all the aforementioned egg domains, such expressions get even more complicated in the non-convex case, involving implicitly defined parameters to unravel which, requires solving highly non-linear (non-polynomial) equations. To circumvent this, we shall use 
instead, a geometric analysis of the Kobayashi indicatrix in \cite{CK2}. 

\medskip

A drawback, however, is that our estimates are possibly good only in a small neighbourhood of boundary points, small enough to atleast avoid the origin. Indeed, as already mentioned $F_{E_{2 \mu}}$ is $1$ at the origin (and thereby on its orbit $Z$), and the focus of the estimates below is on 
$F_{E_{2 \mu}}(z)$ for $z$ varying in the complement of $Z$; particularly in a small neighbourhood of any one 
of the strongly pseudoconvex points (points from $\partial E_{2 \mu} \setminus \overline{Z}$) such as the point $(0,1)$. Taking limits as $p \to 1$, the upper and lower bounds both approach $1$ in accordance with our Theorem \ref{spscvx} and are therefore not too coarse; indeed, they give an idea of the rate of convergence of $F_{E_{2 \mu}}$ to $1$ as we approach the representative strongly pseudoconvex point $(0,1)$ in $\partial E_{2 \mu} $, through the inner normal.

\begin{prop}
For every positive $\mu <1/2$, the following upper and lower bounds hold for $F_{E_{2 \mu}}$ at points of the segment $S$ of representative points.
\begin{itemize}
\item [(a)] 
\begin{equation*}\label{ub}
F_{E_{2\mu}}(0,p) \leq \frac{1}{\mu}\left(\frac{1- p^{2\mu}}{1- p^2}\right) 
- \frac{1-\mu}{2\mu} (1- p^{2\mu}),
\end{equation*}
\item [(b)] 
\begin{equation*}\label{lb-noncvx}
F_{E_{2\mu}}(0,p) \geq \frac{p^{2-2\mu}}{2 \mu^3}\left( \frac{1-p^{2\mu}}{1-p^2}\right)^3 
\left(1+\mu+p^2 -\mu p^2\right).
\end{equation*}
\end{itemize}
\end{prop}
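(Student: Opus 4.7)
The plan is to factor $F_{E_{2\mu}}(0,p) = K_{E_{2\mu}}(0,p)\cdot\la\big(I^k_{E_{2\mu}}(0,p)\big)$, compute the Bergman kernel factor in closed form from the Reinhardt monomial series, and sandwich the indicatrix volume between the volumes of two explicit complex ellipsoids using the Wu-metric framework of \cite{CK2}.

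First I would compute $K_{E_{2\mu}}(0,p)$ exactly. Since $E_{2\mu}$ is a complete Reinhardt domain, the monomials $\{z^\alpha w^\beta\}_{\alpha,\beta\geq 0}$ form an orthogonal basis of $A^2(E_{2\mu})$; a polar-coordinate calculation followed by the substitution $t=|w|^{2\mu}$ expresses each squared norm as a beta integral, giving
\[
\|z^\alpha w^\beta\|^2 \;=\; \frac{\pi^2\,\alpha!\,\Gamma\bigl((\beta+1)/\mu\bigr)}{\mu\,\Gamma\bigl((\beta+1)/\mu+\alpha+2\bigr)}.
\]
At $z=(0,p)$ only the $\alpha=0$ terms of the Bergman series survive, and the tail $(\pi^2\mu)^{-1}\sum_{\beta\geq 0}(\beta+1)(\beta+1+\mu)p^{2\beta}$ evaluates by standard generating-function identities to
\[
K_{E_{2\mu}}(0,p) \;=\; \frac{1+\mu+(1-\mu)p^2}{\pi^2\mu(1-p^2)^3}.
\]

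Next I would bracket $\la\big(I^k_{E_{2\mu}}(0,p)\big)$. The Reinhardt symmetry of the egg forces $I^k_{E_{2\mu}}(0,p)$ to be a Reinhardt balanced pseudoconvex domain, whose Lebesgue measure is $4\pi^2$ times the integral of $r_1 r_2$ over the Reinhardt shadow. For the upper bound, I would appeal to the Wu-metric geometric analysis of \cite{CK2} to establish
\[
I^k_{E_{2\mu}}(0,p) \;\subset\; \left\{v\in\mbb C^2 : \frac{|v_1|^2}{1-p^{2\mu}}+\frac{|v_2|^2}{(1-p^2)^2}<1\right\},
\]
an ellipsoid of volume $\pi^2(1-p^{2\mu})(1-p^2)^2/2$; the two half-axes come from the slice disk $\zeta\mapsto(\zeta\sqrt{1-p^{2\mu}},p)$ and the M\"obius disk $\zeta\mapsto(0,(\zeta+p)/(1+p\zeta))$, which realize the extremal Kobayashi directions along the $v_1$- and $v_2$-axes. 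For the lower bound, I would exhibit an inscribed ellipsoid of volume $\pi^2 p^{2-2\mu}(1-p^{2\mu})^3/(2\mu^2)$ inside $I^k_{E_{2\mu}}(0,p)$ by producing upper Kobayashi estimates from the two-parameter family $\phi(\zeta)=\bigl(\alpha\zeta,\,(\beta\zeta+p)/(1+p\beta\zeta)\bigr)$ of analytic disks through $(0,p)$, optimizing over $(\alpha,\beta)$ subject to $\phi(\mbb D)\subset E_{2\mu}$, again guided by the indicatrix geometry in \cite{CK2}.

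Multiplying $K_{E_{2\mu}}(0,p)$ by each ellipsoid volume and simplifying via $1+\mu+(1-\mu)p^2=2-(1-\mu)(1-p^2)$ yields the two stated inequalities. The principal obstacle is the non-convexity of $E_{2\mu}$ for $\mu<1/2$: Lempert's theorem is unavailable, so the Kobayashi indicatrix admits no closed-form description, and naive Carath\'eodory estimates built from test functions of the form $f(z,w)=z\,h(w)$ or $f(z,w)=(w-p)/(1-pw)$ are not sharp enough to cut out the required upper ellipsoid. The Wu-metric formalism of \cite{CK2} is what makes the argument go through: it replaces the missing explicit metric formula with a Hermitian ellipsoidal sandwich tight enough to recover $F_{E_{2\mu}}(0,p)\to 1$ as $p\to 1$, consistently with Theorem \ref{spscvx}.
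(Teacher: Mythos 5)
Your closed-form computation of $K_{E_{2\mu}}(0,p)$ is correct (the paper simply quotes D'Angelo's formula from \cite{dA94}, which agrees with your expression $\frac{1+\mu+(1-\mu)p^2}{\pi^2\mu(1-p^2)^3}$), and your part (a) is essentially the paper's argument: the Kobayashi indicatrix lies inside the Wu ellipsoid, whose explicit description $\{|v_1|^2/(1-p^{2\mu})+|v_2|^2/(1-p^2)^2<1\}$ is read off from the Wu metric formula of \cite{CK2}, and multiplying its volume by the kernel gives exactly the stated upper bound. The difficulty is in part (b), where your proposal has a genuine gap.

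The two-parameter family $\phi(\zeta)=\bigl(\alpha\zeta,\,(\beta\zeta+p)/(1+p\beta\zeta)\bigr)$ is not rich enough to inscribe the required ellipsoid. Writing $m_\beta(\zeta)=(\beta\zeta+p)/(1+p\beta\zeta)$, subharmonicity of $|\alpha\zeta|^2+|m_\beta(\zeta)|^{2\mu}$ shows that $\phi(\mathbb{D})\subset E_{2\mu}$ forces $|\alpha|^2\le 1-\bigl(\tfrac{|\beta|+p}{1+p|\beta|}\bigr)^{2\mu}$. In the square coordinates $x=|v_1|^2$, $y=|v_2|^2=|\beta|^2(1-p^2)^2$, the tangent vectors reachable by your disks therefore satisfy $x\le(1-p^{2\mu})-2\mu p^{2\mu-1}(1-p^2)|\beta|+O(\beta^2)$, i.e. the boundary of the attainable region falls off like a constant times $\sqrt{y}$ near the $v_1$-axis, whereas the ellipsoid you must inscribe is bounded there by the straight line $x=(1-p^{2\mu})-\mu^2p^{2\mu-2}(1-p^{2\mu})^{-1}\,y$, which decays only linearly in $y$. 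So for small $|v_2|\neq 0$ there are points of the claimed inscribed ellipsoid that no disk of your family produces, and since the Kobayashi indicatrix of the non-convex egg ($\mu<1/2$) is itself non-convex, you cannot repair this by taking convex hulls of attainable vectors. Two fixes are available: either enrich the family, replacing the first component $\alpha\zeta$ by $\zeta$ times an outer function with boundary modulus $\bigl(1-|m_\beta(e^{it})|^{2\mu}\bigr)^{1/2}$ — its derivative at $0$ is the geometric mean, and a second-order expansion in $\beta$ reproduces exactly the line above — or argue as the paper does, invoking the structural theorem of \cite{CK2}: the square transform of the indicatrix is bounded by a straight lower K-curve joined to a strictly convex upper K-curve, so the straight segment extended to the other axis stays inside the closure of the transformed indicatrix; the resulting triangle is precisely the square transform of the ellipsoid of volume $\pi^2p^{2-2\mu}(1-p^{2\mu})^3/(2\mu^2)$, and multiplying by the kernel yields (b).
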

\begin{proof}
Borrowing the computations for the Bergman kernel done in \cite{dA94}, we write down its expression for our egg domains:
\begin{equation}\label{Bergker}
K_{E_{2\mu}}(0,p) = \frac{\mu-1}{\pi^2 \mu} \frac{1}{(1-p^2)^2} + \frac{2}{\pi^2 \mu} \frac{1}{(1-\vert p\vert^2)^3}.
\end{equation}
To estimate the other factor involved in $F$, namely the volume of the Kobayashi indicatrix, we recall the Wu 
ellipsoid which is the best fitting Euclidean ellipsoid containing the Kobayashi indicatrix. From the expression 
for the Wu metric for the egg domains obtained in \cite{CK2}, we 
immediately see that this Wu-ellipsoid is described by
\begin{equation*}
\left\{(v_1, v_2) \in \mf{C}^2: \frac{\vert v_1\vert^2}{1-\vert p\vert^{2\mu}}+\frac{\vert v_2\vert^2}{(1-\vert p\vert^2)^2}<1\right\}.
\end{equation*}
The volume of this ellipsoid is
\begin{align*}
\frac{\pi^2}{2}(1-\vert p\vert^2)^2(1-\vert p\vert^{2\mu}).
\end{align*}
Multiplying this together with the expression in (\ref{Bergker}), renders (a).

(b) Recall from \cite{CK2} in the terminology therein, that the square transform $I^s$ of 
the Kobayashi indicatrix in the absolute space of $\mathbb{C}^2$ i.e., the first quadrant 
of $\mathbb{R}^2$, has boundary formed by the join of two curves called the {\it upper} K-curve 
and the {\it lower} K-curve. The lower K-curve is in fact a straight line segment, while the upper 
K-curve is a curve which can be realized (implicitly) as the graph of a strictly convex function 
on some strict sub-interval of $[0,1]$ containing $0$. It follows therefore that the 
lower K-curve when extended to meet the other axis, it remains within the closure of $I^s$. Consequently, the 
Kobayashi indicatrix in the tangent space to $E_{2 \mu}$ as $(0,p)$ contains the  Euclidean ellipsoid
\[
\left\{ (v_1,v_2) \in \mathbb{C}^2 \; : \; \frac{\vert v_1 \vert^2}{1-p^{2 \mu}}
+ \frac{\mu^2p^{2\mu-2}}{(1-p^{2\mu})^2}\vert v_2 \vert^2 <1 \right\}
\]
whose volume comes out to be 
\[
\frac{\pi^2}{2} \frac{(1-p^{2 \mu})^3}{\mu^2 p^{2\mu-2}}.
\]
This when multiplied by the expression of the Bergman kernel in (\ref{Bergker}), gives the stated lower bound for $F_{E_{2 \mu}}$.
\end{proof}

\begin{bibdiv}
\begin{biblist}
\bib{B2}{article}{
    AUTHOR = {B\l ocki, Zbigniew},
     TITLE = {Cauchy-{R}iemann meet {M}onge-{A}mp\`ere},
   JOURNAL = {Bull. Math. Sci.},
  FJOURNAL = {Bulletin of Mathematical Sciences},
    VOLUME = {4},
      YEAR = {2014},
    NUMBER = {3},
     PAGES = {433--480},
      ISSN = {1664-3607},
   MRCLASS = {32-02 (32A25 32U15 32W05 32W20 35N15)},
  MRNUMBER = {3277882},
MRREVIEWER = {Filippo Bracci},
       DOI = {10.1007/s13373-014-0058-2},
       URL = {https://doi.org/10.1007/s13373-014-0058-2},
}

\bib{B1}{article}{
   author={B\l ocki, Zbigniew},
   title={A lower bound for the Bergman kernel and the Bourgain-Milman
   inequality},
   conference={
      title={Geometric aspects of functional analysis},
   },
   book={
      series={Lecture Notes in Math.},
      volume={2116},
      publisher={Springer, Cham},
   },
   date={2014},
   pages={53--63},
   review={\MR{3364678}},
   doi={10.1007/978-3-319-09477-9\_4},}
   
   \bib{BZ}{article}{
   author={B\l ocki, Zbigniew},
   author={Zwonek, W\l odzimierz},
   title={Estimates for the Bergman kernel and the multidimensional Suita
   conjecture},
   journal={New York J. Math.},
   volume={21},
   date={2015},
   pages={151--161},
   issn={1076-9803},
   review={\MR{3318425}},
}

\bib{BZ1}{article}{
   author={B\l ocki, Zbigniew},
   author={Zwonek, W\l odzimierz},
   title={On the Suita conjecture for some convex ellipsoids in $\Bbb{C}^2$},
   journal={Exp. Math.},
   volume={25},
   date={2016},
   number={1},
   pages={8--16},
   issn={1058-6458},
   review={\MR{3424829}},
   doi={10.1080/10586458.2014.1002871},
}

\bib{CK}{article}{
   author={Cheung, C. K.},
   author={Kim, Kang-Tae},
   title={Analysis of the Wu metric. I. The case of convex Thullen domains},
   journal={Trans. Amer. Math. Soc.},
   volume={348},
   date={1996},
   number={4},
   pages={1429--1457},
   issn={0002-9947},
   review={\MR{1357392}},
   doi={10.1090/S0002-9947-96-01642-X},
}

\bib{CK2}{article}{
   author={Cheung, C. K.},
   author={Kim, K. T.},
   title={Analysis of the Wu metric. II. The case of non-convex Thullen
   domains},
   journal={Proc. Amer. Math. Soc.},
   volume={125},
   date={1997},
   number={4},
   pages={1131--1142},
   issn={0002-9939},
   review={\MR{1363414}},
   doi={10.1090/S0002-9939-97-03695-2},
}

\bib{dA78}{article}{
   author={D'Angelo, John P.},
   title={A note on the Bergman kernel},
   journal={Duke Math. J.},
   volume={45},
   date={1978},
   number={2},
   pages={259--265},
   issn={0012-7094},
   review={\MR{0473231}},
}

\bib{dA94}{article}{
   author={D'Angelo, John P.},
   title={An explicit computation of the Bergman kernel function},
   journal={J. Geom. Anal.},
   volume={4},
   date={1994},
   number={1},
   pages={23--34},
   issn={1050-6926},
   review={\MR{1274136}},
   doi={10.1007/BF02921591},
}


\bib{Graham}{article}{
   author={Graham, Ian},
   title={Boundary behavior of the Carath\'eodory and Kobayashi metrics on
   strongly pseudoconvex domains in $C^{n}$ with smooth boundary},
   journal={Trans. Amer. Math. Soc.},
   volume={207},
   date={1975},
   pages={219--240},
   issn={0002-9947},
   review={\MR{0372252}},
   doi={10.2307/1997175},
}

\bib{Hor}{article}{
author={H\"{o}rmander, Lars},
title={$L^2$ estimates and existence theorems for the $ \overline{\partial} $ operator},
journal={Acta Math.}, 
volume={113},
date={1965},
pages={89--152},
}

\bib{Nik}{article}{
   author={Nikolov, Nikolai},
   title={Localization of invariant metrics},
   journal={Arch. Math. (Basel)},
   volume={79},
   date={2002},
   number={1},
   pages={67--73},
   issn={0003-889X},
   review={\MR{1923040}},
   doi={10.1007/s00013-002-8286-1},
}

\bib{Pinchuk-1980}{article}{
   author={Pin\v cuk, S. I.},
   title={Holomorphic inequivalence of certain classes of domains in ${\bf
   C}^{n}$},
   language={Russian},
   journal={Mat. Sb. (N.S.)},
   volume={111(153)},
   date={1980},
   number={1},
   pages={67--94, 159},
   issn={0368-8666},
   review={\MR{560464}},
}

\bib{Seshadri&Verma-2006}{article}{
author={Seshadri, H.},
author={Verma, K.},
title={On
isometries of the Carath\'{e}odory and Kobayashi metrics on
strongly pseudoconvex domains}, 
journal={Ann. Scuola Norm. Sup. Pisa Cl. Sci.},
volume={5},
date={2006},
pages={393--417},
}
\end{biblist}

\end{bibdiv}

\end{document}